\newtheorem{theorem}{Theorem}
\newtheorem{corollary}[theorem]{Corollary}
\newtheorem{lemma}[theorem]{Lemma}
\newtheorem{prop}[theorem]{Proposition}
\theoremstyle{definition}
\newtheorem{defn}[theorem]{Definition}
\newtheorem{remark}[theorem]{Remark}
\numberwithin{theorem}{section}
\newcommand{\hgq}[5]{
_{2}F_{1} \left( 
\begin{matrix}
#1, & #2 \\
   & #3 \\
\end{matrix}
\bigg{\vert} #4
\right)_#5 
}
\newcommand{\fp}
{\mathbb{F}_p}
\newcommand{\fq}
{\mathbb{F}_q}
\newcommand{\fqc}
{\mathbb{F}_q^{\times}}
\newcommand{\fphat}
{\widehat{\mathbb{F}_{p}^{\times}}}
\newcommand{\Z}{\mathbb{Z}}
\begin{document}

\pagenumbering{arabic}
\setlength{\headheight}{12pt}

\title[Traces of Hecke operators in level 1]{Traces of Hecke operators in level 1 and Gaussian hypergeometric functions}
\author{Jenny G. Fuselier}

\date{}

\begin{abstract}
We provide formulas for traces of $p^{th}$ Hecke operators in level 1 in terms of values of finite field $_2F_1$-hypergeometric functions, extending previous work of the author to all odd primes $p$, instead of only those $p \equiv 1 \pmod{12}$.  We first give a general level 1 trace formula in terms of the trace of Frobenius on a family of elliptic curves, and then we draw on recent work of Lennon to produce level 1 trace formulas in terms of hypergeometric functions for all primes $p > 3$.

\end{abstract}

\maketitle


\section{Introduction}

In recent years, relationships between traces of Hecke operators and counting points on families of varieties have been explored.  For example, Ahlgren and Ono \cite{AO00} described traces of $p^{th}$ Hecke operators in weight 4 and level 8 in terms of the number of $\fp$-points on a Calabi-Yau threefold, while in \cite{Ah02}, Ahlgren related traces of Hecke operators in weight 6 and level 4 to counting $\fp$-points on the Legendre family of elliptic curves.  The level 2 formula (for all weights) was made explicit in terms of the number of $\fp$-points on a family of elliptic curves by Frechette, Ono, and Papanikolas in \cite{FOP04}.  In \cite{Fu10}, the author considered the level 1 case and provided a formula in terms of the number of $\fp$-points on a one parameter family of elliptic curves for primes $p\equiv 1\pmod{12}$.  Most recently, Lennon \cite{Le11_2} considered the levels 3 and 9 scenarios.  Earlier work of Ihara \cite{Ih67} and Birch \cite{Bi68} gave reason to believe such formulas were possible.

Interestingly, these trace formulas also have a link to finite field hypergeometric functions introduced by Greene in the 1980s \cite{Gr87}.  Various authors \cite{AO2000a, FOP04, Le11_2} have used relations between values of Greene's hypergeometric functions and counting $\fp$-points on varieties to produce trace formulas in terms of hypergeometric functions.  In \cite[Thm. 1.2]{Fu10}, the author proved an explicit relationship between counting $\fp$-points on a one-parameter family of elliptic curves and the values of a particular $_2F_1$ function over $\fp$, which led to a level 1 trace formula in terms of hypergeometric functions. However, this formula was only proved for primes $p\equiv 1 \pmod{12}$.  Recently, Lennon \cite[Thms. 1.1 and 2.1]{Le11} has removed this restriction on the congruence class of $p$ to produce formulas that relate $\#E(\fq)$ to values of a $_2F_1$ function over $\fq$ for any $q=p^e$ where $q\equiv 1 \pmod{12}$.

In this paper, we provide a level 1 trace formula that holds for all $p>3$.  Then, we use Lennon's result to produce formulas for traces of Hecke operators in level 1 in terms of finite field hypergeometric functions.

\section{Statement of Main Results}

Let $p>3$ be prime and let $k\geq2$ be an even integer.  Define $F_k(x,y)=\frac{x^{k-1}-y^{k-1}}{x-y}.$ 
Then letting $x+y=s$ and $xy=p$ gives rise to polynomials $G_k(s,p)=F_k(x,y)$.  These polynomials can be written alternatively as
\begin{equation}\label{defn of G}
G_k(s,p)=\sum_{j=0}^{\frac{k}{2}-1} (-1)^j \binom{k-2-j}{j} p^j s^{k-2j-2}.
\end{equation}

Throughout, results will depend on the congruence class of $p$ mod 12.  As such, we set up some notation for various congruence classes of $p$ to be used throughout the remainder of the paper. Whenever $p \equiv 1 \pmod{4}$, we let $a,b\in\mathbb{Z}$ be such that  $p=a^2+b^2$ and $a+bi \equiv 1 \,(2+2i)$ in $\mathbb{Z}[i]$.  In that case, we define
\begin{equation}\label{mu}
\mu_k(p)=\frac{1}{2}[G_k(2a,p)+G_k(2b,p)].
\end{equation}

\noindent Similarly, whenever $p\equiv 1 \pmod{3}$, we let $c,d\in\mathbb{Z}$ be such that $p=c^2-cd+d^2$ and $c+d\omega \equiv 2 \,(3)$ in $\mathbb{Z}[\omega]$, where $\omega=e^{2\pi i/3}$.  This this case, we define
\begin{equation}\label{nu}
\nu_k(p)=\frac{1}{3}[G_k(c+d,p)+G_k(2c-d,p)+G_k(c-2d,p)] .
\end{equation}

We consider a one-parameter family of elliptic curves having $j$-invariant $\frac{1728}{t}$.  Specifically, for $t\in\fp \backslash \{0,1\}$, we let 
\begin{equation}\label{defn of E}
E_t: y^2=4x^3-\frac{27}{1-t}x - \frac{27}{1-t}.
\end{equation}
Let $a(t,p)$ denote the trace of the Frobenius endomorphism on $E_t$.  In particular, for $t\neq0,1$, we have $$a(t,p)=p+1-\#E_t(\fp). $$
Let $\Gamma=SL_2(\Z)$ and let $M_k$ and $S_k$, respectively, denote the spaces of modular forms and cusp forms of weight $k$ for $\Gamma.$  Further, let $\textnormal{Tr}_k(\Gamma,p)$ denote the trace of the Hecke operator $\textnormal{T}_k(p)$ on $S_k$.  Our first main result completely classifies the traces of cusp forms in level 1:

\begin{theorem}\label{trace formula}
Let $p>3$ be prime.  Then for even $k\geq4$,   
$$ \textnormal{Tr}_k(\Gamma,p)=-1-\lambda(k,p)-\sum_{t=2}^{p-1} G_k(a(t,p),p),$$ where
$$\lambda(k,p)=\begin{cases} 
\mu_k(p)+\nu_k(p)  & \text{if $p\equiv 1\pmod{12}$} \\
\mu_k(p)+ (-p)^{\frac{k}{2}-1} & \text{if $p\equiv 5\pmod{12} $}\\
\nu_k(p)+(-p)^{\frac{k}{2}-1} & \text{if $p\equiv 7\pmod{12}$} \\
2(-p)^{\frac{k}{2}-1} & \text{if $p\equiv 11\pmod{12} $}
\end{cases}$$
\end{theorem}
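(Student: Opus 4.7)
The plan is to derive Theorem \ref{trace formula} from the Eichler--Selberg trace formula for $\Gamma=SL_2(\Z)$, which for even $k\ge 4$ and prime $p$ reads
$$\textnormal{Tr}_k(\Gamma,p) = -\frac{1}{2}\sum_{t^2\le 4p} G_k(t,p)\,H(4p-t^2) - \frac{1}{2}\sum_{d\mid p}\min(d,p/d)^{k-1},$$
where $H$ is the Hurwitz--Kronecker class number and $G_k$ is defined by (\ref{defn of G}). Since $p$ is prime, the divisor sum equals $\min(1,p)^{k-1} + \min(p,1)^{k-1}=2$, producing the leading $-1$ in the statement; the remaining task is to match the class-number sum with $\lambda(k,p)+\sum_{t=2}^{p-1}G_k(a(t,p),p)$.

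To do so, I would invoke Deuring's theorem, which for $t^2<4p$ expresses
$$H(4p-t^2)=\sum_{E/\fp,\ a_p(E)=t}\frac{2}{|\textnormal{Aut}(E)|},$$
with a corresponding weighted count for the supersingular classes at $t=0$. Using $G_k(-s,p)=G_k(s,p)$ (valid for even $k$) to pair $\pm t$, the main sum rewrites as $\sum_E G_k(a_p(E),p)/|\textnormal{Aut}(E)|$ over $\fp$-isomorphism classes, which I then group by $j$-invariant. For each $j\in\fp$ with $j\ne 0,1728$, the two quadratic twists have $|\textnormal{Aut}|=2$ and opposite Frobenius traces, so together they contribute $G_k(a_E,p)$. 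The map $t\mapsto 1728/t$ gives a bijection between $t\in\fp^\times\setminus\{1\}$ and $j$-invariants in $\fp^\times\setminus\{1728\}$, and since $G_k$ is even the choice of twist is immaterial; hence $\sum_{t=2}^{p-1}G_k(a(t,p),p)$ collects precisely the contributions from all $j\ne 0,1728$. The residual $\lambda(k,p)$ must therefore account for $j=1728$ (the excluded parameter $t=1$) together with $j=0$ (never realized in the family).

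To finish, I would compute the contributions of the two distinguished $j$-invariants case by case. When $p\equiv 1\pmod 4$, curves with $j=1728$ have CM by $\Z[i]$, four quartic twists with $|\textnormal{Aut}|=4$, and Frobenius traces $\pm 2a,\pm 2b$ determined by $p=a^2+b^2$, producing $\mu_k(p)$; when $p\equiv 3\pmod 4$, $j=1728$ is supersingular with trace $0$, and direct evaluation of (\ref{defn of G}) at $s=0$ (only the $j=k/2-1$ term survives) gives $G_k(0,p)=(-p)^{k/2-1}$. The $j=0$ analysis is analogous, with six sextic twists having CM by $\Z[\omega]$ (for $p\equiv 1\pmod 3$) yielding $\nu_k(p)$, and a supersingular contribution $(-p)^{k/2-1}$ when $p\equiv 2\pmod 3$. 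Assembling these by the residue of $p$ mod $12$ produces the four stated cases of $\lambda(k,p)$.

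The main technical hurdle is the careful bookkeeping at $j=0$ and $j=1728$: verifying that the automorphism weights combine correctly with the number of twists in the ordinary CM cases (yielding the factors of $1/2$ and $1/3$ in (\ref{mu}) and (\ref{nu})), and confirming that the supersingular contributions weigh out to exactly $G_k(0,p)$ rather than a fractional multiple thereof. The identification of the Frobenius traces for CM curves with the integer decompositions $p=a^2+b^2$ and $p=c^2-cd+d^2$ is classical, but the sign normalizations imposed in (\ref{mu}) and (\ref{nu}) require careful tracking to ensure the four cases of $\lambda$ assemble as stated.
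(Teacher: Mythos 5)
Your proposal is correct, and it rests on the same two pillars as the paper's proof: a version of the Eichler--Selberg trace formula, plus the Deuring/Schoof correspondence between class numbers and $\fp$-isomorphism classes of elliptic curves, with the family $E_t$ sweeping out $j\neq 0,1728$ and a separate congruence-class analysis at $j=0$ and $j=1728$. The packaging differs in two ways worth noting. First, you start from the Hurwitz-class-number form of the trace formula, in which the automorphism weights $2/|\textnormal{Aut}(E)|$ are already built in; the paper instead starts from Hijikata's formulation in terms of $h^*$ of individual orders, and must then pass between $h$ and $h^*$ by hand (Lemma \ref{h to h*}, which isolates exactly the discriminants $-3$ and $-4$) and invoke the $k=2$ case of the formula to absorb the $\beta(p)$ and constant terms --- steps your normalization makes unnecessary. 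Second, you keep $G_k$ intact and evaluate it directly at Frobenius traces, whereas the paper expands $G_k$ into monomials via \eqref{defn of G} and works with the power sums $\sum_t a(t,p)^n$ (Proposition \ref{main prop}), reassembling $G_k$ only at the end; your route is leaner, at the cost of quoting the weighted (mass-formula) version of Deuring's theorem, including its $t=0$ supersingular term, rather than Schoof's unweighted count. The bookkeeping you flag does close: for $p\equiv 3\pmod 4$ the two quadratic twists at $j=1728$ each have $\fp$-automorphism group $\{\pm 1\}$ and trace $0$, contributing $2\cdot\tfrac{1}{2}G_k(0,p)=(-p)^{k/2-1}$, and similarly at $j=0$ for $p\equiv 2\pmod 3$; these match the paper's Lemmas \ref{j=1728} and \ref{j=0}.
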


Next, we move to results which link these traces of Hecke operators in level 1 with hypergeometric functions over finite fields.  We begin with some preliminaries.  Let $p$ be a prime and let $q=p^e$.  Let  $\widehat{\mathbb{F}_{q}^{\times}}$ denote the group of all multiplicative characters on $\fqc$.   We extend $\chi\in\ \widehat{\mathbb{F}_{q}^{\times}}$ to all of $\fq$ by setting $\chi(0)=0$.   We let $\varepsilon$ denote the trivial character.  For $A,B\in\fphat$, let $J(A,B)$ denote the usual Jacobi symbol and define  

\begin{equation}\label{binom coeff}
\binom{A}{B}:=\frac{B(-1)}{q}J(A,\overline{B})=\frac{B(-1)}{q} \sum_{x\in \fq} A(x)\overline{B}(1-x).
\end{equation}

\noindent Greene defined \emph{hypergeometric functions over $\fq$} in the following way:   

\begin{defn}[\cite{Gr87} Defn. 3.10]\label{hg}
If $n$ is a positive integer, $x\in\fq$, and $A_0,A_1,\dots,A_n,$\\$B_1,B_2,\dots,B_n \in \widehat{\mathbb{F}_{q}^{\times}}$, then define
$$ _{n+1}F_{n} \left( 
\begin{matrix}
A_0, & A_1, & \dots, & A_n \\
     & B_1, & \dots, & B_n \\
\end{matrix}
\bigg{\vert} x
\right)_q 
:= \frac{q}{q-1} \sum_{\chi\in\widehat{\mathbb{F}_{q}^{\times}}} \binom{A_0\chi}{\chi} \binom{A_1\chi}{B_1\chi} \dots \binom{A_n\chi}{B_n\chi} \chi(x).$$
\end{defn}

In \cite[Thm. 1.2]{Fu10}, the author proved a formula giving an explicit relationship between $a(t,p)$ and a $_2F_1$ hypergeometric function over $\fp$, but required that $p\equiv 1\pmod{12}.$  In this case, the result of Theorem \ref{trace formula} can be rewritten to be in terms of a hypergeometric function over $\fp$.  However, \cite{Fu10} did not address the other classes of primes mod 12.  Notice that either $p\equiv 1\pmod{12}$ or, if not, then $p^2\equiv 1\pmod{12}$.  With this in mind, for the remainder of the paper we define $q=p^{e(p)}$, where
\begin{equation}\label{e(p)}
e(p)=\begin{cases} 1 &\text{if $p\equiv 1\pmod{12}$}\\ 2  & \text{if $p^2 \equiv 1\pmod{12}$}.\end{cases}
\end{equation}

We consider the same family of elliptic curves $E_t$, as defined in \eqref{defn of E}, but now over $\fq$, and with $a(t,q)=q+1-\#E_t(\fq)$.  Thanks to Lennon's results \cite{Le11} and an inverse pair given in \cite{Ri68}, we can now describe the traces of Hecke operators in level 1 in terms of a $_2F_1$ function over $\fq$ for the other classes of primes mod 12:

\begin{theorem}\label{trace with 2F1}
Let $p>3$ be prime such that $p=5,7,11 \pmod{12}$ with $q=p^{e(p)}$ and $T$ a generator of $\widehat{\mathbb{F}_{q}^{\times}}$.  Let $k\geq 4$ be even and $m=\frac{k}{2}-1$.  Define $H_m(x) :=\sum_{i=0}^{m}\binom{m+i}{m-i}x^i$.  Then
\begin{multline*}
\textnormal{Tr}_k(\Gamma,p)=-1-\lambda(k,p)\\
-\sum_{t=2}^{p-1}(-p)^mH_m\left(pT^{\frac{q-1}{2}}(2)T^{\frac{q-1}{4}}(1-t)\hgq{T^{\frac{q-1}{12}}}{T^{\frac{5(q-1)}{12}}}{\varepsilon}{t}{q}-2\right),
\end{multline*}
where $\lambda(k,p)$ is as in Theorem \ref{trace formula}.
\end{theorem}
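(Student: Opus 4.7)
The plan is to convert Theorem~\ref{trace formula} into hypergeometric form by rewriting each summand $G_k(a(t,p),p)$ through two successive substitutions.

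The first step is the polynomial identity
\begin{equation*}
G_k(s,p) = (-p)^m\, H_m\!\left(-\frac{s^2}{p}\right).
\end{equation*}
I would prove this by observing that the closed form \eqref{defn of G} yields $G_{2m+2}(s,p) = p^m\, U_{2m}(s/(2\sqrt{p}))$, where $U_n$ is the Chebyshev polynomial of the second kind, and then applying the Chebyshev-type inverse pair $U_{2m}(y/2) = (-1)^m H_m(-y^2)$ recorded in [Ri68]. Both ingredients reduce to direct coefficient comparisons, or equivalently to induction on $m$ via the three-term Chebyshev recurrence.

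The second step uses the hypothesis $p \equiv 5, 7, 11 \pmod{12}$, which forces $q = p^{e(p)} = p^2$. Since $E_t$ has $\fp$-rational coefficients, its Frobenius eigenvalues over $\fq$ are the squares of those over $\fp$, so $a(t,q) = a(t,p)^2 - 2p$. Hence $-s^2/p = -a(t,q)/p - 2$ when $s = a(t,p)$, and the first step then gives
\begin{equation*}
G_k(a(t,p),p) = (-p)^m\, H_m\!\left(-\frac{a(t,q)}{p} - 2\right).
\end{equation*}

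Third, I would invoke Lennon's theorem [Le11, Thm 2.1]: because $q \equiv 1 \pmod{12}$, it applies to $E_t$ over $\fq$ and provides an identity equivalent to
\begin{equation*}
-\frac{a(t,q)}{p} = p\, T^{(q-1)/2}(2)\, T^{(q-1)/4}(1-t)\, \hgq{T^{(q-1)/12}}{T^{5(q-1)/12}}{\varepsilon}{t}{q}.
\end{equation*}
Substituting this into the display above and plugging the result into Theorem~\ref{trace formula} produces the claimed expression.

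The main obstacle is the first step: locating the precise Riordan inverse pair in [Ri68] and verifying the polynomial identity with the correct combination of signs and powers of $-p$. Once that identity is established, the Frobenius squaring and Lennon's substitution are straightforward.
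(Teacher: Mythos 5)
Your proposal is correct and takes essentially the same route as the paper: the polynomial identity $G_k(s,p)=(-p)^m H_m\left(-s^2/p\right)$ (which the paper records as equation \eqref{G and H} and which your Chebyshev argument verifies correctly), the relation $a(t,p)^2=a(t,p^2)+2p$ coming from squaring the Frobenius eigenvalues, and Lennon's theorem applied to $E_t$ over $\mathbb{F}_{p^2}$, all substituted into Theorem \ref{trace formula}. The one elided detail is your third step: Lennon's Theorem 2.1 applied to $E_t$ in Weierstrass form directly yields a $_2F_1$ with lower parameter $T^{\frac{q-1}{2}}$ evaluated at $-27b^2/(4a^3)=1-t$, and converting this to the stated form with lower parameter $\varepsilon$ and argument $t$ requires Greene's transformation law together with a short character computation simplifying $T^{\frac{q-1}{4}}\left(a^3/27\right)$ — this is exactly the content of the paper's Corollary \ref{cathy cor}, so the gap is routine rather than substantive.
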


Our final result is a generalization of \cite[Thm. 1.4]{Fu10}, giving a recursive formula for traces of Hecke operators in level 1 in terms of hypergeometric functions, now for all primes $p>3$:

\begin{theorem}\label{recursion all p}
Let $p>3$ be prime, and $q=p^{e(p)}$.  Let $k\geq 4$ be even, and $m=\frac{k}{2}-1$.  Further, let $T$ be a generator of $\widehat{\mathbb{F}_{q}^{\times}}$ and $b_i=p^{m-i}\left[\binom{2m}{m-i}-\binom{2m}{m-i-1}\right].$  Then
\begin{align*}
\textnormal{Tr}_{2(m+1)}(\Gamma,p)=&-1-\lambda(2m+2,p)+b_0(p-2)\\
&-\sum_{i=1}^{m-1}b_i\cdot(\textnormal{Tr}_{2i+2}(\Gamma,p)+1+\lambda(2i+2,p)),\\
& - \sum_{t=2}^{p-1} \left(\psi(t,q) \hgq{T^{\frac{q-1}{12}}}{T^{\frac{5(q-1)}{12}}}{\varepsilon}{t}{q}+2p(e(p)-1) \right)^{\frac{2m}{e(p)}} 
\end{align*}
where $$\psi(t,q)=-q T^{\frac{q-1}{2}}(2)T^{\frac{q-1}{4}}(1-t)$$
and  $\lambda(k,p)$ is as in Theorem \ref{trace formula}.
\end{theorem}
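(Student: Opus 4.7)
The plan is to combine an inversion from \cite{Ri68} with Theorem~\ref{trace formula} and Lennon's formula relating $a(t,q)$ to values of ${}_2F_1$. The first task is the algebraic identity
\[
s^{2m} \;=\; \sum_{i=0}^{m} b_i\, G_{2i+2}(s,p),
\]
with $b_i = p^{m-i}\bigl[\binom{2m}{m-i} - \binom{2m}{m-i-1}\bigr]$ as in the statement. One clean route: from \eqref{defn of G} one sees directly that $G_{2i+2}(s,p) = p^{i}\, U_{2i}\!\bigl(s/(2\sqrt{p})\bigr)$, where $U_n$ is the Chebyshev polynomial of the second kind. Expanding $(2\cos\theta)^{2m} = \binom{2m}{m} + 2\sum_{k=0}^{m-1}\binom{2m}{k}\cos\!\bigl((2m-2k)\theta\bigr)$ and rewriting each cosine via $2\cos(2j\theta) = U_{2j}(\cos\theta) - U_{2j-2}(\cos\theta)$, one collects the coefficient $\binom{2m}{m-i} - \binom{2m}{m-i-1}$ on $U_{2i}(\cos\theta)$; substituting $\cos\theta = s/(2\sqrt{p})$ and multiplying by $p^m$ yields the displayed inversion with exactly the $b_i$ in the statement. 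This is the Chebyshev-type inverse pair recorded in \cite{Ri68}.

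Next I set $s = a(t,p)$, sum over $t = 2, \ldots, p-1$, and peel off the $i = 0$ and $i = m$ terms of the inner sum. Since $G_2 \equiv 1$, the $i = 0$ contribution is simply $b_0(p-2)$; for $i \geq 1$, Theorem~\ref{trace formula} gives
\[
\sum_{t=2}^{p-1} G_{2i+2}(a(t,p),p) = -1 - \lambda(2i+2, p) - \textnormal{Tr}_{2i+2}(\Gamma, p).
\]
Because $b_m = 1$, isolating the $i = m$ term and solving for $\textnormal{Tr}_{2m+2}(\Gamma,p)$ reproduces every ingredient of the target formula except that the final $t$-sum at this stage reads $\sum_{t=2}^{p-1} a(t,p)^{2m}$.

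The last step is the identity
\[
a(t,p)^{2m} = \Bigl(\psi(t,q)\,\hgq{T^{\frac{q-1}{12}}}{T^{\frac{5(q-1)}{12}}}{\varepsilon}{t}{q} + 2p(e(p)-1)\Bigr)^{2m/e(p)}.
\]
If $e(p) = 1$ then $q = p$, the exponent is $2m$, and the additive correction vanishes; Lennon's level-$1$ result \cite{Le11} identifies $a(t,p)$ with $\psi(t,p)\cdot {}_2F_1(\ldots)_p$, and any sign ambiguity is absorbed by the even power. If $e(p) = 2$ then $q = p^2$ and the exponent collapses to $m$; the Weil relation $a(t,p^2) = a(t,p)^2 - 2p$ gives $a(t,p)^{2m} = \bigl(a(t,p^2) + 2p\bigr)^{m}$, and Lennon's formula applied over $\fq$ identifies $a(t,p^2)$ with $\psi(t,p^2)\cdot {}_2F_1(\ldots)_q$, matching the claim exactly.

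The main obstacle is Step~1: tracking the binomial bookkeeping carefully enough that the telescoping difference $\binom{2m}{m-i} - \binom{2m}{m-i-1}$ falls out on the nose from the Chebyshev expansion. A secondary source of friction is the uniform handling of the two cases $e(p) \in \{1,2\}$ in the last step, where the additive $2p(e(p)-1)$ must absorb the Weil correction when $e(p) = 2$ while vanishing when $e(p) = 1$, without any sign inconsistency between the definition of $\psi$ and Lennon's statement.
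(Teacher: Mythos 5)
Your proposal is correct and follows essentially the same route as the paper: invert $s^{2m}=\sum_i b_i G_{2i+2}(s,p)$, feed it back into Theorem \ref{trace formula}, and convert $a(t,p)^{2m}$ via Lennon's formula together with $a(t,p)^2=a(t,p^2)+2p$ when $e(p)=2$. The only difference is cosmetic: you derive the inversion identity from the Chebyshev expansion of $(2\cos\theta)^{2m}$, whereas the paper obtains the same coefficients $b_i$ by quoting Riordan's inverse pair applied to $H_m$ with $x=-s^2/p$.
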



\section {Proof of Theorem \ref{trace formula}}\label{Trace formula proof}

To prove Theorem \ref{trace formula}, we begin with Hijikata's version of the Eichler-Selberg trace formula \cite{Hi89}.  The statement of this theorem requires some notation.  If $d<0$, $d\equiv 0,1 \pmod{4}$, let $\mathcal{O}(d)$ denote the unique imaginary quadratic order in $\mathbb{Q}(\sqrt{d})$ having discriminant $d$.  Let $h(d)=h(\mathcal{O}(d))$ be the order of the class group of $\mathcal{O}(d)$, and let $w(d)=w(\mathcal{O}(d))$ be half the cardinality of the unit group of $\mathcal{O}(d)$.  We then let $h^*(d)=h(d)/w(d)$.  The following theorem is the level 1 formulation of Hijikata's version of the Eichler-Selberg trace formula for any odd prime.

\begin{theorem}\label{hijikata new}
Let $p$ be an odd prime and $k\geq 2$ be even.  Then
$$\textnormal{Tr}_k(\Gamma,p)=-1-\frac{1}{2}\beta(p)(-p)^{\frac{k}{2}-1}-\sum_{0<s<2\sqrt{p}}G_k(s,p)\sum_{f|\ell}h^*\left(\frac{s^2-4p}{f^2}\right)+\delta(k),$$
where
\[
\beta(p)=
\begin{cases} h^*(-4p) & \text{ if $p\equiv 1 \pmod{4}$} \\
h^*(-4p)+h^*(-p) & \,\, \text{if $p\equiv 3\pmod{4}$,}
\end{cases}
\]

\noindent $\delta(k)=p+1$ if $k=2$ and $0$ otherwise, and where we classify integers $s$ with $s^2-4p<0$ by some positive integer $\ell$ and square-free integer $m$ via 
$$s^2-4p=\begin{cases}\ell^2m, & 0>m\equiv 1 \pmod{4}\\\ell^24m, & 0>m\equiv 2,3 \pmod{4}.\end{cases}$$

\end{theorem}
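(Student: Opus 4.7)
My plan is to derive this level 1 formulation by specializing Hijikata's general trace formula from \cite{Hi89} and rewriting the surviving terms in the present notation. In Hijikata's setup, $\textnormal{Tr}_k(\Gamma_0(N),p)$ is expressed as a sum of scalar, elliptic, hyperbolic, and parabolic contributions indexed by integer conjugacy classes of matrices with determinant $p$. Each elliptic or hyperbolic term is indexed by a pair $(s,f)$, where $s$ is a trace and $f$ a conductor, and carries the weight $F_k(x,y)$ on the roots of $X^2-sX+p$; by construction this weight is $G_k(s,p)$ as in \eqref{defn of G}, together with a class number factor for the order of discriminant $(s^2-4p)/f^2$.

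The first reduction is to eliminate the hyperbolic and parabolic contributions. The parabolic case requires $s^2=4p$, which is impossible since $p$ is not a square. For $N=1$, Hijikata's local factors $c(s,f,p)$ in the hyperbolic regime simplify to produce no net contribution. What remains is the elliptic sum over integers $s$ with $s^2<4p$. Using the evenness of $k$, one checks directly from \eqref{defn of G} that $G_k(-s,p)=G_k(s,p)$, and the class-number data is also invariant under $s\mapsto -s$. Pairing $s$ with $-s$ thus collapses the sum to one over $0<s<2\sqrt{p}$ with an overall factor of $2$, plus a single $s=0$ term that receives half weight.

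The $s=0$ contribution requires direct evaluation. Setting $x+y=0$ and $xy=p$ in $F_k(x,y)$ gives $G_k(0,p)=(-p)^{k/2-1}$. The class-number piece at $s=0$ splits into two cases: the order of discriminant $-4p$ always contributes $h^*(-4p)$, and the order of discriminant $-p$ additionally contributes $h^*(-p)$ when $-p\equiv 1\pmod 4$, i.e.\ exactly when $p\equiv 3\pmod 4$. Packaging these recovers the definition of $\beta(p)$ and the factor $-\tfrac{1}{2}\beta(p)(-p)^{k/2-1}$. The leading $-1$ is the scalar contribution from the identity matrix in level one. The adjustment $\delta(k)=p+1$ for $k=2$ compensates for the absence of weight $2$ holomorphic forms in level one: since $M_2(\Gamma)=\{0\}$, the trace on $S_2$ is forced to equal $0$, requiring the $p+1$ correction corresponding to the Hecke eigenvalue of the (nonexistent) weight $2$ Eisenstein series.

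The main obstacle is the bookkeeping rather than any deep new idea. One must verify that the classification of $s^2-4p$ by its square-free part $m$ (cases $m\equiv 1\pmod 4$ versus $m\equiv 2,3\pmod 4$, with the $\ell^2m$ versus $\ell^2\cdot 4m$ convention) matches Hijikata's conductor parametrization and normalization of $h^*$, so that the conductor sum $\sum_{f\mid \ell}h^*((s^2-4p)/f^2)$ is indeed the correct level 1 specialization. This is purely a notational translation, but it is the step where small errors most easily slip in.
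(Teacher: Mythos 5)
The paper offers no proof of this statement: it is imported verbatim from Hijikata--Pizer--Shemanske \cite{Hi89} as the level~1 specialization of their trace formula, so there is nothing internal to compare against. Your outline --- specialize to $N=1$, use $G_k(-s,p)=G_k(s,p)$ for even $k$ to fold the elliptic sum onto $0<s<2\sqrt{p}$ with the $s=0$ term at half weight, and evaluate the $s=0$ data directly --- is the right shape, and your two explicit computations check out: $G_k(0,p)=(-p)^{\frac{k}{2}-1}$ (only the $j=\frac{k}{2}-1$ term of \eqref{defn of G} survives), and the square-free part of $-4p$ is $-p\equiv 1\pmod 4$ exactly when $p\equiv 3\pmod 4$, which is what produces the extra $h^*(-p)$ in $\beta(p)$.

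The genuine gap is in your accounting of the non-elliptic terms. For determinant $p$ a prime there is \emph{no} scalar (identity) contribution at all --- that term in Hijikata's formula requires the determinant to be a perfect square --- so the leading $-1$ cannot be ``the scalar contribution from the identity matrix.'' It comes precisely from the hyperbolic classes you claim contribute nothing: $s^2-4p$ is a positive square exactly for $s=\pm(p+1)$, i.e.\ $\{x,y\}=\{1,p\}$ or $\{-1,-p\}$, and after the level~1 simplification of the local factors these carry weight $\min(|x|,|y|)^{k-1}=1$ each, yielding $-\tfrac12(1+1)=-1$. Carried out as written, your derivation would drop this term and then have no source for the $-1$. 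Likewise, your explanation of $\delta(2)=p+1$ is circular: you infer the correction from the known fact $S_2(\Gamma)=\{0\}$ instead of deriving it from the extra weight-$2$ summand ($\sigma_1(p)$-type) that Hijikata's formula carries only when $k=2$. A useful consistency check once these are repaired: the $k=2$ case must collapse to the Hurwitz--Kronecker class number relation $\sum_{0<s<2\sqrt{p}}\sum_{f\mid\ell}h^*\bigl((s^2-4p)/f^2\bigr)=p-\tfrac12\beta(p)$, which is exactly how the paper uses this theorem in the proof of Theorem \ref{trace formula}.
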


To link Theorem \ref{hijikata new} to $a(t,p)$, we need to consider all isomorphism classes of elliptic curves over $\fp$.  If $E$ is any elliptic curve defined over $\fp$, let $a(E)=p+1-\#E(\fp)$. Additionally, for a perfect field $K$, we define $$\emph{Ell}_K:=\{[E]_K : E \,\textnormal{is defined over}\, K\}$$
where $[E]_K$ denotes the isomorphism class of $E$ over $K$ and $[E_1]_K=[E_2]_K$ if there exists an isomorphism $\beta:E_1 \rightarrow E_2$ over $K$. We first address the cases  $j(E)=1728$ and $j(E)=0$.

\begin{lemma}\label{j=1728}
Let $p$ be an odd prime. Whenever $p \equiv 1 \pmod{4}$, define $a,b\in\mathbb{Z}$ be such that  $p=a^2+b^2$ and $a+bi \equiv 1 \,(2+2i)$ in $\mathbb{Z}[i]$.  Then, for $n\geq 2$ even, 

$$\sum_{\substack{[E]_{\fp}\in Ell_{\fp}\\j(E)=1728}}a(E)^n=\begin{cases} 2^{n+1}(a^n+b^n) & \text{if $p\equiv 1 \pmod{4}$}\\ 0 &\text{if $p\equiv 3 \pmod{4}.$} \end{cases}$$

\end{lemma}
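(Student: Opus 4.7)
The plan is to enumerate the $\fp$-isomorphism classes of elliptic curves with $j(E) = 1728$, determine their Frobenius traces, and sum over the classes. Since $p$ is odd, any such curve is $\fp$-isomorphic to one of the form $E_c: y^2 = x^3 - cx$ for some $c \in \fpc$, and $E_c \cong_{\fp} E_{c'}$ if and only if $c/c' \in (\fpc)^4$. Hence the isomorphism classes in question are parameterized by $\fpc/(\fpc)^4$, which has order $\gcd(4, p-1)$: two classes when $p \equiv 3 \pmod{4}$, four when $p \equiv 1 \pmod{4}$.

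For $p \equiv 3 \pmod{4}$, I would show that every $E_c$ is supersingular. Writing
$$a(E_c) = -\sum_{x \in \fp} \chi_2(x^3 - cx),$$
with $\chi_2$ the quadratic character on $\fpc$, the substitution $x \mapsto -x$ combined with $\chi_2(-1) = -1$ gives $a(E_c) = -a(E_c)$, forcing $a(E_c) = 0$. The sum therefore vanishes.

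For $p \equiv 1 \pmod{4}$, the key step is to identify the four Frobenius traces. The classical result, going back to Gauss, is that they form the multiset $\{\pm 2a, \pm 2b\}$, where $p = a^2 + b^2$ under the primary normalization $a + bi \equiv 1 \pmod{2+2i}$ in $\Z[i]$. One proves this by counting $\#E_c(\fp)$ with the quartic residue character $\chi_4$: the count is $p+1$ plus a Jacobi-sum expression involving $\chi_4(c)$, and the Jacobi sum $J(\chi_4, \chi_2)$ has absolute value $\sqrt{p}$ and, under the above normalization, equals $a+bi$ up to units of $\Z[i]$. As $c$ ranges over representatives of the four quartic classes, $\chi_4(c)$ ranges over the four fourth roots of unity, producing the four traces $\pm 2a, \pm 2b$. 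Since $n$ is even, $(\pm 2a)^n = 2^n a^n$ and $(\pm 2b)^n = 2^n b^n$, so
$$\sum_{\substack{[E]_{\fp} \in Ell_{\fp} \\ j(E)=1728}} a(E)^n \;=\; 2(2a)^n + 2(2b)^n \;=\; 2^{n+1}(a^n + b^n).$$

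The main obstacle is the Jacobi-sum identification in the third step: pinning down which twist receives which trace. Fortunately, evenness of $n$ makes the sign ambiguity harmless; it suffices to know the multiset of traces, so the remaining unit ambiguity in the Gaussian factorization $a + bi$ of $J(\chi_4, \chi_2)$ does not affect the final sum.
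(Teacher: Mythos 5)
Your argument is correct and is essentially the paper's: the $p\equiv 3\pmod 4$ case is handled by the same supersingularity observation (the paper asserts $\#E(\fp)=p+1$ directly; you derive it from the character sum), and the $p\equiv 1\pmod 4$ case, which the paper simply cites from \cite[Lemma IV.3.3]{Fu07}, is exactly the classical quartic-twist/Jacobi-sum computation you outline. Your remark that the unit ambiguity in $J(\chi_4,\chi_2)$ is immaterial because $n$ is even is the right way to dispose of the only delicate point.
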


\begin{proof}
The case $p\equiv 1 \pmod{4}$ was proved by the author in \cite[Lemma IV.3.3]{Fu07}.  If $p \equiv 3 \pmod{4}$ and $[E]_{\fp}\in Ell_{\fp}$, then $\#E(\fp)=p+1$, so $a(E)=0$.
\end{proof}

\begin{lemma}\label{j=0}
Let $p>3$ be prime. Whenever $p\equiv 1 \pmod{3}$, we let $c,d\in\mathbb{Z}$ be such that $p=c^2-cd+d^2$ and $c+d\omega \equiv 2 \,(3)$ in $\mathbb{Z}[\omega]$, where $\omega=e^{2\pi i/3}$.   Then, for $n\geq 2$ even, 
$$\sum_{\substack{[E]_{\fp}\in Ell_{\fp}\\j(E)=0}}a(E)^n=\begin{cases} 2[(c+d)^n+(2c-d)^n+(c-2d)^n] & \text{if $p\equiv 1 \pmod{3}$} \\ 0 & \text{if $p\equiv 2 \pmod{3}$}. \end{cases}$$ 
\end{lemma}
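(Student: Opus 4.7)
The plan is to split the computation by the residue of $p$ modulo $3$. If $p \equiv 2 \pmod 3$, then cubing is a bijection on $\fp$, so for any $a \in \fpc$ the affine point count on $y^2 = x^3 + a$ equals that on the line $y^2 = x + a$, giving $\#E(\fp) = p + 1$ and hence $a(E) = 0$ for every $j=0$ curve. The sum is then zero, handling the lower case.

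Now suppose $p \equiv 1 \pmod 3$. Then $\fp$ contains a primitive sixth root of unity, and since the automorphism group of any $j = 0$ elliptic curve over $\overline{\fp}$ is cyclic of order $6$, the set of $\fp$-isomorphism classes of curves with $j = 0$ is in bijection with $\fpc/(\fpc)^6$. One may take $E_a: y^2 = x^3 + a$ as representatives, giving exactly six classes.

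The central task is to identify the multiset $\{a(E_a)\}$. The classical Jacobi sum calculation for $\#E_a(\fp)$ (cf.\ Ireland--Rosen, \S 11) shows that, as $a$ runs through the six sextic classes, the traces $a(E_a)$ are $\pi_j + \overline{\pi_j}$ with $\pi_j$ ranging over the six associates $\{\pm 1, \pm \omega, \pm \omega^2\}\pi$ of the primary prime $\pi = c + d\omega \in \mathbb{Z}[\omega]$ satisfying $\pi\bar{\pi} = p$ and $c + d\omega \equiv 2 \pmod 3$. Using $\omega^2 = -1 - \omega$ and $\bar{\omega} = \omega^2$, a direct computation yields
\begin{align*}
\pi + \bar{\pi} &= 2c - d, \\
\omega\pi + \overline{\omega\pi} &= -(c + d), \\
\omega^2\pi + \overline{\omega^2\pi} &= -(c - 2d),
\end{align*}
with the remaining three traces being the negatives of these. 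For $n$ even, pairing each trace with its negative yields
$$\sum_{\substack{[E]_{\fp}\in Ell_{\fp}\\ j(E)=0}} a(E)^n \;=\; 2\bigl[(c+d)^n + (2c-d)^n + (c-2d)^n\bigr],$$
as claimed.

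The main obstacle is justifying that the Jacobi sum computation produces exactly the six associates of the specific primary prime $\pi$ normalized by $c + d\omega \equiv 2 \pmod 3$. This is classical; conveniently, only the multiset of traces (not the particular correspondence between sextic twist classes and associates) affects the sum of even powers, so we need not track which twist produces which trace. As a sanity check, the squared traces sum to $6p$, matching $(c+d)^2 + (2c-d)^2 + (c-2d)^2 = 6(c^2 - cd + d^2) = 6p$.
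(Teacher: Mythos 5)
Your proof is correct, and where the paper actually argues (the case $p\equiv 2\pmod 3$) it is identical in substance: both observe that every $j=0$ curve then has $\#E(\fp)=p+1$, hence $a(E)=0$. For $p\equiv 1\pmod 3$ the paper gives no argument at all, merely citing Lemma IV.3.5 of the author's thesis \cite{Fu07}; you instead supply the classical proof of that cited fact. Your structure is sound: the six $\fp$-isomorphism classes with $j=0$ are the sextic twists $y^2=x^3+a$ with $a$ ranging over $\fpc/(\fpc)^6$, the Jacobi-sum point count identifies the multiset of traces with $\{\zeta\pi+\overline{\zeta\pi}\}$ as $\zeta$ runs over the sixth roots of unity in $\Z[\omega]$ for the primary prime $\pi=c+d\omega$, and your evaluation of the three $\pm$-pairs as $\pm(2c-d)$, $\pm(c+d)$, $\pm(c-2d)$ checks out, so for even $n$ the sum collapses to $2[(c+d)^n+(2c-d)^n+(c-2d)^n]$ as required. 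You correctly note that only the multiset matters, which excuses not tracking which twist class realizes which associate. The one step you lean on without proof --- that the sextic twists realize exactly the six associates of $\pi$ --- is the entire content of the thesis lemma the paper cites, so deferring it to the standard reference is a reasonable level of detail, though a referee might ask you to at least state the point-count formula $\#E_a(\fp)=p+1+\chi_6(a)J+\overline{\chi_6(a)J}$ (with $\chi_6$ a sextic character and $J$ a suitable Jacobi sum of norm $p$) from which surjectivity of $\chi_6$ onto $\mu_6$ gives the claim. One small slip in your sanity check: the six squared traces sum to $12p$, not $6p$; your $6p$ is the sum over one representative of each $\pm$-pair. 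This does not affect the argument.
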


\begin{proof}
The case $p\equiv 1\pmod{3}$ was proved by the author in \cite[Lemma IV.3.5]{Fu07}.  If $p\equiv 2 \pmod{3}$  and $[E]_{\fp}\in Ell_{\fp}$, then $\#E(\fp)=p+1$, so $a(E)=0$.
\end{proof}

The proof of Theorem \ref{trace formula}  proceeds along the same line as the proof of the $p\equiv 1\pmod{12}$ case proved by the author in \cite{Fu10}.  In particular, we begin with the following extension of \cite[Lemma 5.3]{Fu10}.

\begin{lemma}\label{h to h*}
Let $p>3$ be prime.  Then for $n\geq 2$ even, 
\begin{multline*}
\sum_{0<s<2\sqrt{p}}s^n \sum_{f|\ell}h\left(\frac{s^2-4p}{f^2}\right)=\sum_{0<s<2\sqrt{p}}s^n \sum_{f|\ell}h^*\left(\frac{s^2-4p}{f^2}\right)\\+\frac{1}{4}\sum_{\substack{[E]_{\fp}\in Ell_{\fp}\\j(E)=1728}}a(E)^n+\frac{1}{3}\sum_{\substack{[E]_{\fp}\in Ell_{\fp}\\j(E)=0}}a(E)^n.
\end{multline*}
\end{lemma}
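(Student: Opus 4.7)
The plan is to pinpoint precisely where the discrepancy $h - h^*$ can be nonzero. Since $w(d) = 1$ for every $d < -4$ while $w(-4) = 2$ and $w(-3) = 3$, the only nonvanishing values are $h(-4) - h^*(-4) = \tfrac{1}{2}$ and $h(-3) - h^*(-3) = \tfrac{2}{3}$. Consequently the left-minus-right side of the desired identity collapses to
\begin{equation*}
\tfrac{1}{2} \sum_{(s,f)\,:\,(s^2-4p)/f^2 = -4} s^n \;+\; \tfrac{2}{3} \sum_{(s,f)\,:\,(s^2-4p)/f^2 = -3} s^n,
\end{equation*}
so the entire argument reduces to enumerating the pairs $(s,f)$ contributing to each sum and identifying the totals with the $j = 1728$ and $j = 0$ sums provided by Lemmas \ref{j=1728} and \ref{j=0}.

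First I would handle the $d = -4$ piece. The equation $(s^2-4p)/f^2 = -4$ forces $s$ even, and writing $s = 2s'$ it becomes $s'^2 + f^2 = p$ with $s', f > 0$. For any such pair, $s^2 - 4p = -4f^2$ has squarefree kernel $-1 \equiv 3 \pmod{4}$, so in the notation of Theorem \ref{hijikata new} the conductor attached to $s$ is exactly $\ell = f$; the divisibility $f \mid \ell$ is then automatic and no smaller divisor of $\ell$ produces discriminant $-4$ or $-3$. When $p \equiv 1 \pmod{4}$, the only positive solutions are $(s',f) = (a,b)$ and $(b,a)$ with $p = a^2 + b^2$, contributing $\tfrac{1}{2}[(2a)^n + (2b)^n] = 2^{n-1}(a^n + b^n)$, which is exactly $\tfrac{1}{4}$ of the sum supplied by Lemma \ref{j=1728}. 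When $p \equiv 3 \pmod{4}$ no solutions exist and both sides vanish.

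Next I would treat $d = -3$ in parallel. The condition becomes $s^2 + 3f^2 = 4p$ with $s, f > 0$, and since $-3 \equiv 1 \pmod{4}$ is squarefree, Theorem \ref{hijikata new} again gives $\ell = f$. Each pair contributes $\tfrac{2}{3} s^n$. When $p \equiv 1 \pmod{3}$ and $p = c^2 - cd + d^2$, the identities
\begin{equation*}
4p = (c+d)^2 + 3(c-d)^2 = (2c-d)^2 + 3d^2 = (c-2d)^2 + 3c^2
\end{equation*}
display the three positive-$s$ representations arising from the action of the six units of $\mathbb{Z}[\omega]$ on a fixed prime element above $p$, so the total contribution is $\tfrac{2}{3}[(c+d)^n + (2c-d)^n + (c-2d)^n]$, matching $\tfrac{1}{3}$ of the quantity in Lemma \ref{j=0}. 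When $p \equiv 2 \pmod{3}$ no solutions exist and both sums vanish.

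The main obstacle will be the bookkeeping inside this enumeration: I would need to verify via unique factorization in $\mathbb{Z}[i]$ and $\mathbb{Z}[\omega]$ that the positive-integer solutions to $s'^2 + f^2 = p$ and $s^2 + 3f^2 = 4p$ are exactly those exhibited above, and to track how the fixed normalizations $a + bi \equiv 1 \pmod{2+2i}$ and $c + d\omega \equiv 2 \pmod{3}$ of the chosen prime element determine the values of $s$ appearing in each representation. Once this is in place, the matching with Lemmas \ref{j=1728} and \ref{j=0} completes the proof.
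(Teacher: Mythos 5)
Your proposal is correct and follows essentially the same route as the paper: both isolate the discrepancy $h-h^*$ at the discriminants $-4$ and $-3$ (with weights $\tfrac12$ and $\tfrac23$) and match the resulting lattice-point counts against Lemmas \ref{j=1728} and \ref{j=0}, which is exactly the content of the paper's identities \eqref{h=-4} and \eqref{h=-3}. The only difference is that you re-derive the representation counts for $p\equiv 1\pmod 4$ and $p\equiv 1\pmod 3$ directly from unique factorization in $\mathbb{Z}[i]$ and $\mathbb{Z}[\omega]$, where the paper cites \cite{Fu10} and only verifies the empty cases.
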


\begin{proof}
The proof for primes $p\equiv 1\pmod{12}$ is provided in \cite[Lemma 5.3]{Fu10}.  It can be adapted to hold for all $p>3$ once one verifies that the following two identities remain true:
\begin{equation}\label{h=-4}
\sum_{0<s<2\sqrt{p}}s^n\sum_{\substack{f|\ell\\\frac{s^2-4p}{f^2}=-4}}1 = \frac{1}{2}\sum_{\substack{[E]_{\fp}\in\emph{Ell}_{\fp}\\j(E)=1728}}a(E)^n
\end{equation}

\begin{equation}\label{h=-3}
\sum_{0<s<2\sqrt{p}}s^n\sum_{\substack{f|\ell\\\frac{s^2-4p}{f^2}=-3}}1 = \frac{1}{2}\sum_{\substack{[E]_{\fp}\in\emph{Ell}_{\fp}\\j(E)=0}}a(E)^n
\end{equation}

First consider \eqref{h=-4}.  The proof given in \cite{Fu10} holds for all primes $p \equiv 1 \pmod{4}$.  In light of Lemma \ref{j=1728}, we must verify that if $p\equiv 3\pmod{4}$, then 
$$\sum_{0<s<2\sqrt{p}}s^n\sum_{\substack{f|\ell\\\frac{s^2-4p}{f^2}=-4}}1=0.$$
We verify this by proving that no $s,f$ exist to contribute to the sums.  For, suppose $s,f\in\mathbb{Z}$ such that $0<s<2\sqrt{p}$ and $\displaystyle \frac{s^2-4p}{f^2}=-4.$  Then $4|s^2$, so $s$ must be even.  Substituting $s=2r$ and rearranging gives $r^2+f^2=p$, which is not possible since $p\equiv 3\pmod{4}$.  This verifies \eqref{h=-4} for the remaining primes.

We handle \eqref{h=-3} in a similar way.  The proof in \cite{Fu10} verifies the equation for $p \equiv 1 \pmod{3}$.  Keeping in mind Lemma \ref{j=0}, we must prove that  if $p\equiv 2\pmod{3}$, then $$\sum_{0<s<2\sqrt{p}}s^n\sum_{\substack{f|\ell\\\frac{s^2-4p}{f^2}=-3}}1=0.$$
Suppose then that we have $s,f\in\mathbb{Z}$ such that $0<s<2\sqrt{p}$ and $\displaystyle \frac{s^2-4p}{f^2}=-3.$ Then $4p=3f^2+s^2$ and hence $p\equiv 2\equiv s^2 \pmod{3}$.  However, since $\left(\frac{2}{3}\right)=-1,$ this is impossible.  This verifies \eqref{h=-3} for the remaining primes, and completes the proof of the lemma.
\end{proof}

The following proposition generalizes \cite[Prop. 5.4]{Fu10} by removing the restriction on the congruence class of $p \pmod{12}$.

\begin{prop}\label{main prop}
Let $p>3$ be prime and $n\geq 2$ be even. Then
$$\sum_{t=2}^{p-1}a(t,p)^n=\sum_{0<s<2\sqrt{p}}s^n \sum_{f|\ell}h^{*} \left(\frac{s^2-4p}{f^2}\right)\\-\alpha(n,p)-\gamma(n,p),$$
where
$$\alpha(n,p)=\begin{cases} 2^{n-1}(a^n+b^n) & \text{if $p\equiv 1\pmod{4}$}\\ 0 & \text{if $p\equiv 3\pmod{4}$},\end{cases}$$

$$ \gamma(n,p)\begin{cases}  \frac{1}{3}[(c+d)^n+(2c-d)^n+(c-2d)^n]& \text{if $p\equiv 1\pmod{3}$}\\ 0 & \text{if $p\equiv 2\pmod{3}$},\end{cases}$$
and $a,b,c,d$ are defined as in the statements of Lemmas \ref{j=1728} and \ref{j=0}.

\end{prop}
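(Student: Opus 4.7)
The plan is to follow the same double-counting strategy used in \cite[Prop. 5.4]{Fu10} for $p\equiv 1\pmod{12}$, reducing the claim to (i) a comparison of $\sum_t a(t,p)^n$ with a sum over all $\fp$-isomorphism classes of elliptic curves, (ii) the Deuring correspondence, and (iii) the $h\to h^*$ identity from Lemma \ref{h to h*}. The congruence restriction disappears precisely because Lemma \ref{h to h*} now holds for every $p>3$ and Lemmas \ref{j=1728} and \ref{j=0} give the correct evaluations (including $0$) in every residue class.

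First I would note that $j(E_t)=1728/t$, so as $t$ ranges over $\fp\setminus\{0,1\}$ the curves $E_t$ realize each $j$-invariant in $\fp\setminus\{0,1728\}$ exactly once. Because every such $j$-invariant supports exactly two $\fp$-isomorphism classes (quadratic twists of each other, with opposite traces) and $n$ is even,
$$\sum_{\substack{[E]_{\fp}\in Ell_{\fp}\\j(E)\ne 0,1728}} a(E)^n \;=\; 2\sum_{t=2}^{p-1} a(t,p)^n.$$

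Next I would invoke Deuring's correspondence which, after pairing $s$ with $-s$ and discarding the supersingular classes (where $a(E)=0$ contributes nothing for $n\ge 2$), gives
$$\sum_{[E]_{\fp}\in Ell_{\fp}} a(E)^n \;=\; 2\sum_{0<s<2\sqrt{p}} s^n \sum_{f\mid\ell} h\!\left(\frac{s^2-4p}{f^2}\right).$$
Splitting off the two special $j$-invariants from the left side, combining with the previous display, and dividing by $2$ yields
$$\sum_{t=2}^{p-1} a(t,p)^n \;=\; \sum_{0<s<2\sqrt{p}} s^n \sum_{f\mid\ell} h\!\left(\frac{s^2-4p}{f^2}\right) - \tfrac{1}{2}\sum_{\substack{[E]_{\fp}\in Ell_{\fp}\\j(E)=1728}} a(E)^n - \tfrac{1}{2}\sum_{\substack{[E]_{\fp}\in Ell_{\fp}\\j(E)=0}} a(E)^n.$$

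Finally I would apply Lemma \ref{h to h*} to convert $h$ to $h^*$; this introduces $+\tfrac{1}{4}$ and $+\tfrac{1}{3}$ multiples of the two exceptional $j$-invariant sums, so combined with the $-\tfrac{1}{2}$ coefficients above the net weights collapse to $-\tfrac{1}{4}$ and $-\tfrac{1}{6}$ respectively. Substituting the explicit values from Lemmas \ref{j=1728} and \ref{j=0} shows that these coefficients become exactly $-\alpha(n,p)$ and $-\gamma(n,p)$ in each of the four residue classes of $p$ modulo $12$, which is the claimed formula. The main obstacle is the Deuring step: making sure the class-number count of $\fp$-isomorphism classes of trace $s$ is genuinely $\sum_{f\mid\ell} h\big(\tfrac{s^2-4p}{f^2}\big)$ uniformly in $s$, including when the corresponding endomorphism order is $\Z[i]$ or $\Z[\omega]$, whose extra units would otherwise distort the twist-based bookkeeping; once that accounting is verified, the remainder is a direct substitution.
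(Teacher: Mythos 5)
Your proposal is correct and follows essentially the same route as the paper: reduce $\sum_t a(t,p)^n$ to $\tfrac12$ of the sum over all $\fp$-isomorphism classes with $j\neq 0,1728$, count classes of given trace by class numbers (the paper cites Hasse plus Schoof's theorem where you invoke the Deuring correspondence — the same fact), convert $h$ to $h^*$ via Lemma \ref{h to h*} to get the net weights $-\tfrac14$ and $-\tfrac16$, and finish with Lemmas \ref{j=1728} and \ref{j=0}. The arithmetic checks out: $\tfrac14\cdot 2^{n+1}(a^n+b^n)=\alpha(n,p)$ and $\tfrac16\cdot 2[(c+d)^n+(2c-d)^n+(c-2d)^n]=\gamma(n,p)$, with both vanishing in the residue classes where the lemmas give $0$.
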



\begin{proof}
The bulk of the proof the author provides for the $p\equiv 1\pmod{12}$ case in \cite[Prop. 5.4]{Fu10} holds for the other congruence classes of $p$, so we give an outline here.  Since $j(E_t)=\frac{1728}{t}$, we have that 
\begin{align*}
\sum_{t=2}^{p-1}a(t,p)^n&=\sum_{\substack{[E]_{\overline{\mathbb{F}}_p}\in Ell_{\overline{\mathbb{F}}_p};\, E/\fp\\j(E)\neq 0,1728}}a(E)^n=\frac{1}{2}\sum_{\substack{[E]_{\fp}\in Ell_{\fp}\\j(E)\neq 0,1728}}a(E)^n\\
&=\frac{1}{2}\Biggl[ \sum_{[E]_{\fp}\in Ell_{\fp}}a(E)^n - \sum_{\substack{[E]_{\fp}\in Ell_{\fp}\\j(E)=1728}}a(E)^n-\sum_{\substack{[E]_{\fp}\in Ell_{\fp}\\j(E)=0}}a(E)^n\Biggr].
\end{align*}
Regardless of the congruence class of $p \pmod{12}$, the first sum in the last line above can still be written in terms of class numbers by combining Hasse's theorem with a theorem of Schoof \cite[Thm. 4.6]{Sc87}.  This results in 
\begin{align*}\label{h step}
\sum_{t=2}^{p-1}a(t,p)^n&=\sum_{0<s<2\sqrt{p}}s^n\sum_{f|\ell} h\left(\frac{s^2-4p}{f^2}\right)-\frac{1}{2}\sum_{\substack{[E]_{\fp}\in \emph{Ell}_{\fp}\\j(E)=1728}}a(E)^n-\frac{1}{2}\sum_{\substack{[E]_{\fp}\in \emph{Ell}_{\fp}\\j(E)=0}}a(E)^n\\
&=\sum_{0<s<2\sqrt{p}}s^n\sum_{f|\ell} h^*\left(\frac{s^2-4p}{f^2}\right)-\frac{1}{4}\sum_{\substack{[E]_{\fp}\in \emph{Ell}_{\fp}\\j(E)=1728}}a(E)^n-\frac{1}{6}\sum_{\substack{[E]_{\fp}\in \emph{Ell}_{\fp}\\j(E)=0}}a(E)^n,
\end{align*}
by Lemma \ref{h to h*}.  One now applies Lemmas \ref{j=1728} and \ref{j=0} in each appropriate congruence class to obtain the result.
\end{proof}

\noindent With the these tools in place, we now complete the proof of Theorem \ref{trace formula}.

\begin{proof}[Proof of Theorem \ref{trace formula}]
The proof proceeds in a similar fashion to the author's proof of the $p\equiv 1 \pmod{12}$ case in \cite[Thm. 1.3]{Fu10}, with a few modifications.  We still begin with an application of Theorem \ref{hijikata new} and then substitute the definition of $G_k(s,p)$.  This gives
\begin{align}
\textnormal{Tr}_k(\Gamma,p)       &=-1-\frac{1}{2}\beta(p)(-p)^{\frac{k}{2}-1}-(-p)^{\frac{k}{2}-1}\sum_{0<s<2\sqrt{p}}1\sum_fh^*\left(\frac{s^2-4p}{f^2}\right)\\
                   &\hspace*{0.2in}-\sum_{j=0}^{\frac{k}{2}-2}(-1)^j\binom{k-2-j}{j}p^j\sum_{0<s<2\sqrt{p}}s^{k-2j-2}\sum_fh^*\left(\frac{s^2-4p}{f^2}\right).\notag
\end{align}
Now, notice that the $k=2$ case of Theorem \ref{hijikata new} gives 
\begin{equation*}
0=p-\frac{1}{2}\beta(p)-\sum_{0<s<2\sqrt{p}}1\sum_fh^*\left(\frac{s^2-4p}{f^2}\right).
\end{equation*}
Substituting and applying Proposition \ref{main prop} with $n=k-2j-2$ gives
\begin{align}
\textnormal{Tr}_k(\Gamma,p)                 &=-1+(-p)^{\frac{k}{2}-1}\cdot(-p)-\sum_{j=0}^{\frac{k}{2}-2}(-1)^j\binom{k-2-j}{j}p^j\sum_{t=2}^{p-1}a(t,p)^{k-2j-2}\notag \\
                   &\hspace*{0.2in}-\sum_{j=0}^{\frac{k}{2}-2}(-1)^j\binom{k-2-j}{j}p^j\alpha(k-2j-2,p)\label{first step} \\
                   &\hspace*{0.2in}-\sum_{j=0}^{\frac{k}{2}-2}(-1)^j\binom{k-2-j}{j}p^j\gamma(k-2j-2,p).\notag
\end{align}
To complete the proof, we distribute the copies of $(-p)^{\frac{k}{2}-1}$ to the three summations in a specific way.  Notice that
$$(-p)^{\frac{k}{2}-1}(-p)=-(-p)^{\frac{k}{2}-1}(p-2)-(-p)^{\frac{k}{2}-1}-(-p)^{\frac{k}{2}-1}.$$
First, since  $G_2=1$, we see that 
\begin{equation}\label{sum 1}
-(-p)^{\frac{k}{2}-1}(p-2)-\sum_{j=0}^{\frac{k}{2}-2}(-1)^j\binom{k-2-j}{j}p^j\sum_{t=2}^{p-1}a(t,p)^{k-2j-2}=-\sum_{t=2}^{p-1}G_k(a(t,p),p).
\end{equation}

A straightforward calculation for each of the congruence classes of $p \pmod{12}$ verifies 
\begin{align*}
\lambda(k,p)&=(-p)^{\frac{k}{2}-1} + \sum_{j=0}^{\frac{k}{2}-2}(-1)^j\binom{k-2-j}{j}p^j\alpha(k-2j-2,p)\\
&\quad +(-p)^{\frac{k}{2}-1}+ \sum_{j=0}^{\frac{k}{2}-2}(-1)^j\binom{k-2-j}{j}p^j\gamma(k-2j-2,p).
\end{align*}
\end{proof}


\section{Trace formulas in terms of hypergeometric functions}\label{traces and 2f1s}

We now prove Theorems \ref{trace with 2F1} and \ref{recursion all p}.  As mentioned before, one essential tool is a theorem of Lennon, which writes the trace of Frobenius of any elliptic curve in Weierstrass form in terms of a finite field hypergeometric function:

\begin{theorem}\cite[Thm. 2.1]{Le11}\label{cathy 2.1}
Let $q=p^e$, where $p>3$ is prime and $q\equiv 1 \pmod{12}$.  Let $E:y^2=x^3+ax+b$ be an elliptic curve over $\fq$ in Weierstrass form with $j(E)\neq 0,1728$.  Then the trace of the Frobenius map on $E$ can be expressed as
$$a(E(\fq))=-q\cdot T^{\frac{q-1}{4}}\left(\frac{a^3}{27}\right) \,\cdot \, \hgq{T^{\frac{q-1}{12}}}{T^{\frac{5(q-1)}{12}}}{T^{\frac{q-1}{2}}}{-\frac{27b^2}{4a^3}}{q}.$$
\end{theorem}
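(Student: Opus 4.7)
I would start from the standard character-sum expression for the trace of Frobenius,
\[
a(E(\fq)) = -\sum_{x \in \fq} \phi\bigl(x^3 + ax + b\bigr),
\]
where $\phi = T^{(q-1)/2}$ is the quadratic character. The hypothesis $j(E) \neq 0, 1728$ forces $ab \neq 0$, so one may make the multiplicative substitution $x = -\tfrac{3b}{2a}\,u$ to normalize the cubic:
\[
x^3 + ax + b \;=\; \tfrac{b}{2}\bigl(\,2 - 3u + \lambda u^3\,\bigr),\qquad \lambda := -\tfrac{27 b^2}{4 a^3},
\]
which is exactly the parameter that appears inside the $_2F_1$. Pulling the constant out of the character gives
\[
a(E(\fq)) \;=\; -\phi(b/2)\sum_{u \in \fq} \phi\bigl(2 - 3u + \lambda u^3\bigr),
\]
and the inner sum now depends only on $\lambda$.

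Next, I would convert the quadratic character into a sum over multiplicative characters using Greene's binomial-coefficient identity \eqref{binom coeff} (or, equivalently, Gauss-sum inversion), producing a double sum whose inner sum on $u$ contains the cubic polynomial $2-3u+\lambda u^3$. The cubic structure in $u$ brings in cubic Gauss sums, and the Hasse-Davenport product relation applied to a triple of cubic Gauss sums is the standard mechanism by which characters of order $3$ combine with the quadratic character $\phi$ to yield characters of order $12$. The hypothesis $q \equiv 1 \pmod{12}$ is precisely what guarantees $T^{(q-1)/12}$ and $T^{5(q-1)/12}$ are well-defined, and these are exactly the characters that emerge as the top parameters of a $_2F_1$ after all the Gauss-sum collapses. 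The leftover prefactor $\phi(b/2)$, combined with the residual quadratic character of $\lambda = -27b^2/(4a^3)$, condenses into the quartic twist $T^{(q-1)/4}(a^3/27)$ via the identity $T^{(q-1)/4}(\cdot)^2 = \phi(\cdot)$, while the factor $q$ out front is recovered from Greene's normalization in Definition~\ref{hg}.

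The principal obstacle is the bookkeeping: tracking exactly which Jacobi sums survive each step and ensuring that the argument of the resulting Greene $_2F_1$ is $\lambda$ itself rather than one of its Kummer transforms (such as $1-\lambda$, $1/\lambda$, or $\lambda/(\lambda-1)$). This will require careful use of Greene's $_2F_1$ transformation laws in \cite{Gr87}, and a final direct matching with Definition~\ref{hg} to pin down the overall sign and the normalization constant. A secondary technical point is verifying that the boundary terms in $u$ (where $2 - 3u + \lambda u^3$ vanishes) do not contribute anomalously; these vanish cleanly because $\phi(0) = 0$ by convention, and there are at most finitely many such $u$.
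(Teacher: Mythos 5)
You should first be aware that the paper does not prove this statement at all: it is quoted verbatim from Lennon \cite[Thm.~2.1]{Le11} and used as a black box (the paper's own contribution begins with Corollary \ref{cathy cor}, which specializes Lennon's formula to the family $E_t$). So there is no internal proof to compare against; what you have written is an attempted reconstruction of Lennon's argument.

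As such a reconstruction, your opening moves are sound: the character-sum formula $a(E(\fq))=-\sum_{x}\phi(x^3+ax+b)$ is correct, $j(E)\neq 0,1728$ does force $ab\neq 0$, and the substitution $x=-\tfrac{3b}{2a}u$ does yield $\tfrac{b}{2}(2-3u+\lambda u^3)$ with $\lambda=-\tfrac{27b^2}{4a^3}$, which is indeed the correct hypergeometric argument (one checks $\lambda=1-1728/j$). But the proposal then stops exactly where the proof actually lives. The entire content of Lennon's theorem is the passage from the twisted cubic character sum to Greene's normalized $_2F_1$ with the specific parameters $T^{\frac{q-1}{12}},T^{\frac{5(q-1)}{12}},T^{\frac{q-1}{2}}$, the specific prefactor $-q\,T^{\frac{q-1}{4}}(a^3/27)$, and the argument $\lambda$ rather than a Kummer transform of it. You name the tools (Gauss-sum inversion, the Davenport--Hasse product relation, Greene's transformation laws) but defer every one of the computations in which an error could occur, and you flag this yourself as ``the principal obstacle.'' A referee would not accept ``the characters of order $12$ emerge after all the Gauss-sum collapses'' as a proof step: until the Davenport--Hasse manipulations are carried out and the surviving Jacobi sums are matched term-by-term against Definition \ref{hg}, the identity is not established. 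The honest course here is either to carry out that computation in full (it occupies several pages in \cite{Le11} and in the analogous \cite[Thm.~1.2]{Fu10}) or to do what the paper does and cite Lennon.
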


We now specify this theorem to our family of curves.

\begin{corollary}\label{cathy cor}
Let $p>3$ be prime and $q=p^{e(p)}$, where $e(p)$ is defined as in \eqref{e(p)}.  Then
$$a(t,q)=-qT^{\frac{q-1}{2}}(2)T^{\frac{q-1}{4}}(1-t) \hgq{T^{\frac{q-1}{12}}}{T^{\frac{5(q-1)}{12}}}{\varepsilon}{t}{q}.$$
\end{corollary}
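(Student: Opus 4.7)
My plan is to apply Theorem \ref{cathy 2.1} (Lennon) to the family $E_t$ once it has been placed into short Weierstrass form, and then to transform the resulting hypergeometric into the one stated in the corollary using a Kummer-type transformation of Greene's $_2F_1$.

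First, I place $E_t$ into short Weierstrass form. Substituting $y \mapsto 2y$ in the defining equation \eqref{defn of E} gives the isomorphic model $y^2 = x^3 - \frac{27}{4(1-t)}x - \frac{27}{4(1-t)}$, so that $a = b = -\frac{27}{4(1-t)}$ in the notation of Theorem \ref{cathy 2.1}. A direct computation yields $\frac{a^3}{27} = -\frac{729}{64(1-t)^3}$ and $-\frac{27b^2}{4a^3} = 1-t$. Feeding these into Lennon's formula produces
\[
a(t,q) = -q\,T^{\frac{q-1}{4}}\!\left(-\frac{729}{64(1-t)^3}\right)\, \hgq{T^{\frac{q-1}{12}}}{T^{\frac{5(q-1)}{12}}}{T^{\frac{q-1}{2}}}{1-t}{q}.
\]

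Next I simplify the leading character. Factoring $-\frac{729}{64(1-t)^3} = -\frac{3^6}{2^6(1-t)^3}$ and reducing exponents modulo the orders of $T^{\frac{q-1}{4}}$ (order $4$) and $T^{\frac{q-1}{2}}$ (order $2$), the character splits as a product of $T^{\frac{q-1}{4}}(-1)$, $T^{\frac{q-1}{2}}(3)$, $T^{\frac{q-1}{2}}(2)$, and $T^{\frac{q-1}{4}}(1-t)$, which already exposes two of the factors appearing in the target formula.

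The remaining task is to convert the bottom parameter of the hypergeometric from $T^{\frac{q-1}{2}}$ to $\varepsilon$ and its argument from $1-t$ to $t$. The crucial observation is that with $A = T^{\frac{q-1}{12}}$, $B = T^{\frac{5(q-1)}{12}}$, $C = T^{\frac{q-1}{2}}$ one has $AB\overline{C} = \varepsilon$, which is exactly the relation that governs the classical Kummer transformation $x \leftrightarrow 1-x$ in the complex setting. Applying the finite-field analog of that transformation from Greene's paper \cite{Gr87} rewrites $\hgq{A}{B}{C}{1-t}{q}$ as a character-valued multiple of $\hgq{A}{B}{\varepsilon}{t}{q}$; absorbing this prefactor into the leading character from the previous step should cause the stray contributions $T^{\frac{q-1}{4}}(-1)$ and $T^{\frac{q-1}{2}}(3)$ to cancel and leave precisely $-q\,T^{\frac{q-1}{2}}(2)\,T^{\frac{q-1}{4}}(1-t)$ out front.

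I expect the main obstacle to be exactly this bookkeeping. Greene's Kummer-type transformation carries nontrivial Jacobi-sum coefficients, and in the present situation the degeneracy $AB = C$ is slightly delicate because one of the binomial-coefficient factors in the transformed sum collapses at $\chi = \varepsilon$. Tracking all of these character factors carefully and verifying that they telescope to the clean expression stated in the corollary is the crux of the proof.
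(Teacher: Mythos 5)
Your proposal follows the paper's proof essentially verbatim: the same change of variables to short Weierstrass form with $a=b=-\tfrac{27}{4(1-t)}$, the same application of Lennon's Theorem \ref{cathy 2.1}, the same splitting of the leading character, and the same use of Greene's transformation (Thm.\ 4.4(i)) exploiting $AB\overline{C}=\varepsilon$ to pass from argument $1-t$ to $t$. The bookkeeping you flag as the crux does go through cleanly: that particular transformation carries only the prefactor $A(-1)=T^{\frac{q-1}{12}}(-1)$ (no Jacobi-sum coefficient), which combines with the stray $T^{\frac{q-1}{4}}(-1)$ to give $T^{\frac{q-1}{3}}(-1)=1$, while $T^{\frac{q-1}{2}}(3)=1$ because $q\equiv 1\pmod{12}$.
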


To prove this corollary, we require a transformation law proved by Greene:

\begin{theorem}\cite[Thm 4.4(i)]{Gr87}
If  $A,B,C \in \widehat{\mathbb{F}_{q}^{\times}}$ and $x\in\fq \backslash \{0,1\}$, then
$$\hgq{A}{B}{C}{x}{q}=A(-1)\hgq{A}{B}{AB\overline{C}}{1-x}{q}.$$
\end{theorem}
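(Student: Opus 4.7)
The plan is to prove the transformation by first deriving Greene's integral (Jacobi-sum) representation of $_2F_1$ and then applying a change of variable in the resulting character sum that converts $x$ into $1-x$. This mirrors Greene's own strategy in [Gr87].

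The first step is to establish the integral representation
\[
\hgq{A}{B}{C}{x}{q} = \varepsilon(x)\,\frac{BC(-1)}{q}\sum_{y\in\fq} B(y)\,\overline{B}C(1-y)\,\overline{A}(1-xy).
\]
Starting from Greene's defining formula, I would expand each of the binomial coefficients $\binom{A\chi}{\chi}$ and $\binom{B\chi}{C\chi}$ using the Jacobi-sum identity $\binom{X}{Y}=\frac{Y(-1)}{q}\sum_z X(z)\overline{Y}(1-z)$, interchange the order of summation in the resulting triple sum, and collapse the inner sum over $\chi\in\widehat{\fq^\times}$ via the orthogonality relation $\frac{1}{q-1}\sum_\chi \chi(u)=\delta_{u,1}$ for $u\in\fq^\times$.

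The second step is to transform the integral representation at $x$ into the one at $1-x$ by a two-stage substitution. Setting $u=1-xy$ (so $y=(1-u)/x$) converts $\overline{A}(1-xy)$ to $\overline{A}(u)$, and a further rescaling $u=(1-x)v$ produces $\overline{A}(1-(1-x)v)$, which is precisely the form appearing in the integral representation of $\hgq{A}{B}{AB\overline{C}}{1-x}{q}$. The remaining factors in $v$ collect into $B(v)\cdot\overline{B}(AB\overline{C})(1-v)=B(v)\cdot A\overline{C}(1-v)$ after invoking the identity $\overline{B}\cdot(AB\overline{C})=A\overline{C}$, which matches the integrand of the right-hand side. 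Finally, the constant prefactors accumulated from the substitutions simplify, using $\overline{A}(-1)=A(-1)$, $B(-1)^2=1$, and the identity $\varepsilon(x)\cdot\varepsilon(1-x)$ (valid for $x\in\fq\setminus\{0,1\}$), to leave exactly the single factor $A(-1)$ claimed in the statement.

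The main obstacle is the careful bookkeeping of the sign and character prefactors $A(-1)$, $B(-1)$, $C(-1)$ produced by each substitution, and verifying that they telescope to exactly one net factor of $A(-1)$. A secondary technical point is confirming that the substitutions $y\mapsto(1-u)/x$ and $u\mapsto(1-x)v$ are legitimate bijections of $\fq^\times$ and that the boundary points where the substitutions degenerate contribute only trivially to the sum under the hypothesis $x\in\fq\setminus\{0,1\}$.
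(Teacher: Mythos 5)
The paper does not actually prove this statement---it is imported verbatim from Greene \cite[Thm 4.4(i)]{Gr87}---so your proposal has to be judged on its own terms. Your first step is sound: the integral representation $\hgq{A}{B}{C}{x}{q}=\varepsilon(x)\frac{BC(-1)}{q}\sum_{y\in\fq}B(y)\overline{B}C(1-y)\overline{A}(1-xy)$ is Greene's Theorem 3.6, and it does follow from Definition \ref{hg} by expanding the two binomial coefficients as Jacobi sums and collapsing the character sum by orthogonality. The strategy of then changing variables in the $y$-sum is also the right one; it is essentially Greene's own.

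The gap is in the substitution itself. Your two stages $u=1-xy$ and $u=(1-x)v$ compose to the affine map $y=\bigl(1-(1-x)v\bigr)/x$, and under it $\overline{A}(1-xy)=\overline{A}(u)$ becomes $\overline{A}\bigl((1-x)v\bigr)=\overline{A}(1-x)\,\overline{A}(v)$, \emph{not} $\overline{A}(1-(1-x)v)$ as you assert: characters are multiplicative, so rescaling the argument cannot convert $u$ into $1-u$. Carrying your substitution through honestly, the integrand becomes (up to factors depending only on $x$) $\overline{A}(v)\,\overline{B}C(1-v)\,B(1-(1-x)v)$, which is the integrand for $\hgq{\overline{B}}{\overline{A}}{\overline{A}\,\overline{B}C}{1-x}{q}$ --- a different, and in general inequivalent, transformation. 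The substitution that does work is the involution $y=v/(v-1)$: then $1-y=(1-v)^{-1}$ and $1-xy=\frac{1-(1-x)v}{1-v}$, so the integrand becomes $B(-1)\,B(v)\,A\overline{C}(1-v)\,\overline{A}(1-(1-x)v)$, which is exactly the integrand for $\hgq{A}{B}{AB\overline{C}}{1-x}{q}$ because $\overline{B}\cdot AB\overline{C}=A\overline{C}$. The prefactors then compare as $BC(-1)\cdot B(-1)=C(-1)$ against $B\cdot AB\overline{C}(-1)=AC(-1)$, leaving precisely the net factor $A(-1)$ for $x\neq 0,1$. One must also check that the terms at $y=1$ and $v=1$, where the M\"obius map degenerates, vanish; they do, since $\chi(0)=0$ for every $\chi$ under the paper's convention. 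So the architecture of your argument is correct, but the central change of variables must be replaced before the bookkeeping you describe can succeed.
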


\begin{proof}[Proof of Cor. \ref{cathy cor}]
After putting $E_t$ into Weierstrass form, we have $a=b=\frac{-27}{4(1-t)}$ in Theorem \ref{cathy 2.1}.  Then $\frac{a^3}{27}=\frac{-3^6}{4^3(1-t)^3}$ and $-\frac{27b^2}{4a^3}=1-t$.  Combining these simplifications with Greene's theorem above gives
\begin{equation}\label{a(t,q)}
a(t,q)=-qT^{\frac{q-1}{4}}\left(\frac{-3^6}{4^3(1-t)^3}\right)T^\frac{q-1}{12}(-1)  \hgq{T^{\frac{q-1}{12}}}{T^{\frac{5(q-1)}{12}}}{\varepsilon}{t}{q}.
\end{equation}
Now, using multiplicativity and the fact that $T$ has order $q-1$, we have
\begin{align*}
T^{\frac{q-1}{4}}\left( \frac{-3^6}{4^3(1-t)^3} \right)&=T^{\frac{3(q-1)}{4}}\left(\frac{-9}{4(1-t)} \right)=T^{\frac{q-1}{4}}\left( \frac{4(1-t)}{-9}\right) \\
&=T^{\frac{q-1}{2}}(2)T^{\frac{q-1}{4}}(1-t)T^{\frac{q-1}{4}}(-1)T^{\frac{q-1}{2}}(3)\\
&=T^{\frac{q-1}{2}}(2)T^{\frac{q-1}{4}}(1-t)T^{\frac{q-1}{4}}(-1),
\end{align*}
since $T^{\frac{q-1}{2}}$ is its own inverse and $q\equiv 1\pmod{12}$.  The proof is completed by making this substitution for $T^{\frac{q-1}{4}}\left( \frac{-3^6}{4^3(1-t)^3} \right)$ into \eqref{a(t,q)} and noting that $T^{\frac{q-1}{4}}(-1)T^\frac{q-1}{12}(-1)=T^{\frac{q-1}{3}}(-1)=1,$ since $-1=(-1)^3$ and $T^{\frac{q-1}{3}}$ has order 3.

\end{proof}

\begin{remark}
If $e(p)=1$ (i.e. $q=p$), the above corollary precisely matches the author's result \cite[Thm. 1.3]{Fu10}.
\end{remark}

\begin{remark}
Lennon gives another way of writing the trace of Frobenius in terms of a $_2F_1$ function in \cite[Thm. 1.1]{Le11}, using the $j$-invariant and discriminant of $E$.  We use Lennon's Theorem 2.1 because it leads to a simpler hypergeometric function in this instance.
\end{remark}

We require two more tools to prove our trace theorems.  First, note that whenever $e(p)=2$ (i.e. $q=p^2$), Theorem \ref{cathy 2.1} relates $a(t,p^2)$ to a hypergeometric function over $\mathbb{F}_{p^2}$. Even though our trace formula Theorem \ref{trace formula} is in terms of $a(t,p)$, we can still gain new information, since 
\begin{equation}\label{frob link}
a(t,p)^2=a(t,p^2)+2p.
\end{equation}

The last tool is an inverse pair given in \cite{Ri68}.   As in the statement of Theorems \ref{trace with 2F1} and \ref{recursion all p}, we let $m=\frac{k}{2}-1$ and also define $H_m(x) :=\sum_{i=0}^{m}\binom{m+i}{m-i}x^i$.  Then, as in \cite{Fu10}, notice
\begin{equation}\label{G and H}
G_k(s,p)=(-p)^mH_m\left(\frac{-s^2}{p}\right).
\end{equation}
Consider the inverse pair \cite[p. 67]{Ri68} given by
\begin{equation}\label{inverse pair}
\rho_n(x)=\sum_{k=0}^{n}\binom{n+k}{n-k}x^k, \,\, x^n=\sum_{k=0}^n(-1)^{k+n}\left[\binom{2n}{n-k}-\binom{2n}{n-k-1}\right]\rho_k(x).
\end{equation}
Applying this to the definition of $H_m$, we see
\begin{equation*}
x^m=\sum_{i=0}^{m}(-1)^{i+m}\left[\binom{2m}{m-i}-\binom{2m}{m-i-1}\right]H_i(x).
\end{equation*}
By combining \eqref{G and H} with the choice $x=\frac{-s^2}{p}$, we have
\begin{equation}\label{s2m eqn}
s^{2m}=\sum_{i=0}^{m}b_iG_{2i+2}(s,p), 
\end{equation}
where $b_i=p^{m-i}\left[\binom{2m}{m-i}-\binom{2m}{m-i-1}\right]$.  We may now prove Theorems  \ref{trace with 2F1} and \ref{recursion all p}.
\begin{proof}[Proof of Theorem \ref{trace with 2F1}]
Recall that in the statement of this theorem, $p\equiv 5, 7, 11 \pmod{12}$, so $q=p^2$.  By \eqref{G and H} and \eqref{frob link}, we have 
\begin{align*}
G_k(a(t,p),p)&=(-p)^mH_m\left(\frac{-a(t,p)^2}{p}\right)\\
&=(-p)^mH_m\left(\frac{-a(t,p^2)}{p}-2 \right)\\
&=(-p)^mH_m\left( pT^{\frac{q-1}{2}}(2)T^{\frac{q-1}{4}}(1-t)\hgq{T^{\frac{q-1}{12}}}{T^{\frac{5(q-1)}{12}}}{\varepsilon}{t}{q} -2 \right),
\end{align*}
by Corollary \ref{cathy cor}.
Combining this with Theorem \ref{trace formula} completes the proof.
\end{proof}

\begin{proof}[Proof of Theorem \ref{recursion all p}]
Recall that $p>3$ is any prime and $q=p^{e(p)}$, where $e(p)$ is defined as in \eqref{e(p)}.  The proof begins along the same lines as the proof of \cite[Thm. 1.4]{Fu10}.  Beginning with Theorem \ref{trace formula}, we have
\begin{equation}\label{trace form 2m+2}
\textnormal{Tr}_{2(m+1)}(\Gamma,p)=-1-\lambda(2m+2,p)-\sum_{t=2}^{p-1}G_{2m+2}(a(t,p),p).
\end{equation}
Now, \eqref{s2m eqn} implies 
$$s^{2m}=\sum_{i=0}^{m}b_iG_{2i+2}(s,p)=G_{2m+2}(s,p)+\sum_{i=0}^{m-1}b_iG_{2i+2}(s,p).$$
We isolate $G_{2m+2}(s,p)$ and take $s=a(t,p)$.  Substituting into \eqref{trace form 2m+2} gives
\begin{align*}
\textnormal{Tr}_{2(m+1)}(\Gamma,p)&=-1-\lambda(2m+2,p)-\sum_{t=2}^{p-1}\Biggl( a(t,p)^{2m} -\sum_{i=0}^{m-1}b_iG_{2i+2}(a(t,p),p)\Biggr) \\
&=-1-\lambda(2m+2,p)-\sum_{t=2}^{p-1}a(t,p)^{2m}+b_0(p-2)\\
&	\hspace*{0.2in}+\sum_{i=1}^{m-1}b_i\sum_{t=2}^{p-1}G_{2i+2}(a(t,p),p)\\
&=-1-\lambda(2m+2,p)-\sum_{t=2}^{p-1}a(t,p)^{2m}+b_0(p-2)\\
&\hspace*{0.2in}-\sum_{i=1}^{m-1}b_i(\textnormal{Tr}_{2i+2}(\Gamma,p)+1+\lambda(2i+2,p)),
\end{align*} 
since $G_2=1$ and by again applying Theorem \ref{trace formula} for the last equality.  To complete the proof, we consider $a(t,p)^{2m}$.  If $e(p)=1$ (i.e. $q=p$) then using either Corollary \ref{cathy cor} or \cite[Thm. 1.2]{Fu10}, we have 
$$a(t,p)^{2m}=\left(-qT^{\frac{q-1}{2}}(2)T^{\frac{q-1}{4}}(1-t) \hgq{T^{\frac{q-1}{12}}}{T^{\frac{5(q-1)}{12}}}{\varepsilon}{t}{q}\right)^{2m},$$
so our result matches \cite[Thm. 1.4]{Fu10} in this case.  

If $e(p)=2$ (i.e. $q=p^2$), then \eqref{frob link} gives
\begin{align*}
a(t,p)^{2m}&=(a(t,p)^2)^m=(a(t,p^2)+2p)^m\\
&=\left(-qT^{\frac{q-1}{2}}(2)T^{\frac{q-1}{4}}(1-t) \hgq{T^{\frac{q-1}{12}}}{T^{\frac{5(q-1)}{12}}}{\varepsilon}{t}{q}+2p\right)^m,
\end{align*}
by Corollary \ref{cathy cor}.

The final statement of the theorem combines these two cases together, completing our proof.

\end{proof}

\end{document}